\documentclass[a4paper,12pt]{article}

\usepackage{color}
\usepackage{graphicx}
\usepackage{graphics}
\usepackage {amsfonts}
\usepackage {amsmath}
\usepackage {amsthm}
\usepackage {amssymb}
\usepackage {framed}
\usepackage {amsxtra}
\usepackage {enumerate}
\usepackage {fancyhdr}

\makeatletter

\makeatother

\title{Antiassociative groupoids}
\author{M. Braitt, D. Hobby, and D. Silberger}

\rhead{Antiassociative groupoids}
\lhead{M. Braitt, D. Hobby, and D. Silberger}

\newcommand{\term}{\mathop{\rm term}}

\newcommand{\path}{\mathop{\rm path}}
\newcommand{\Paths}{\mathop{\rm Paths}}

\newcommand{\Ltwo}{\mathop{\rm L_2}}
\newcommand{\Rthree}{\mathop{\rm R_3}}

\theoremstyle{definition}
\newtheorem{df}{Definition} [section]

\theoremstyle{plain}
\newtheorem{theorem}[df]{Theorem}

\newtheorem{lemma}[df]{Lemma}

\newtheorem{conjecture}[df]{Conjecture}

\newtheorem{question}[df]{Question}

\begin{document}

\centerline{\Large\bf Antiassociative groupoids}\vspace{2em}

\centerline{$^1$Milton Braitt}

\centerline{$^2$David Hobby}

\centerline{$^2$Donald Silberger}

\begin{abstract}
Given a groupoid \ $\langle G, \star \rangle$, \ and
 \ $k \geq 3$, \ we say that $G$ is {\em antiassociative} 
 iff for all \ $x_1, x_2, x_3 \in G$, \ 
 $(x_1 \star x_2) \star x_3$ \ and \ $x_1 \star (x_2 \star x_3)$ \ 
 are never equal.  Generalizing this, \ $\langle G, \star \rangle$ \ 
is $k$-antiassociative iff for all 
\ $x_1, x_2, \ldots x_k \in G$, \ any two distinct expressions
made by putting parentheses in
 \ $x_1 \star x_2 \star x_3 \star \cdots x_k$ \ are never equal.
 
We prove that for every $k \geq 3$, there exist finite groupoids
that are $k$-antiassociative.  We then generalize this, investigating when other pairs of groupoid terms can be made never equal.

\end{abstract}

\begin{section}{Introduction}

Around fifteen years ago, the second two authors 
started to investigate finite groupoids which were 
antiassociative.  Instead of obeying the associative law that \ 
$(x_1 \star x_2) \star x_3$ \ and \ $x_1 \star (x_2 \star x_3)$ \ 
are always equal, a groupoid is {\em antiassociative} iff \ 
$(x_1 \star x_2) \star x_3$ \ and \ $x_1 \star (x_2 \star x_3)$ \ 
are never equal.  This is a natural change to make
to the associative law.  

We were aided by a program written by Ming Lei Wu, which went 
through all the $4^{16}$ possible $4$-element groupoids and 
returned a list of 421,560 which were antiassociative.  About 
97\% of these antiassociative groupoids were what we called
``deranged'', and turned out to be constructible in the following way.

Let $G$ be any set with $2$ or more elements.  First pick a 
function \ $f : G \rightarrow G$ \ with the 
property that \ $f(x) \neq x$ \ for all $x$ 
(the ``derangement'').
Then define the binary operation on $G$ by  \ 
$x \star y = f(x)$, \ or alternatively, by \ 
$x \star y = f(y)$.  This makes $\langle G, \star \rangle$
a {\em deranged} groupoid. \  
When $x \star y = f(x)$, we have \ 
$(x_1 \star x_2) \star x_3 = f(x_1) \star x_3 
= f(f(x_1)) \neq f(x_1) = x_1 \star (x_2 \star x_3)$,
showing $\langle G,\star \rangle$ is antiassociative.
If $x \star y = f(y)$, the proof is similar.

Of the remaining 3\% of the antiassociative groupoids
found by the program, almost all had $\star$ tables which were
within a few entries of the table of one of the deranged
groupoids.  But beyond that, we found few patterns 
in their construction.  We conjecture that a
similar situation holds for the examples we give
in this paper.  They probably will not be unique, since
it will sometimes be possible to modify them slightly
in a haphazard way.

Before moving on to $k$-antiassociative groupoids, we
will invest in some definitions.  Using terminology from
universal algebra (see \cite{BurrisSankappanavar}), an 
{\em algebra} is a set with some number of (finitary) operations on it.
A {\em term} of an algebra is any expression on a finite
number of variables that can be made by composing the
(basic) operations of the algebra.  We will use the same
notation both for terms as formal expressions and for the
resulting functions on an algebra, since the distinction
should be clear from context.  This paper will 
focus on groupoids, which are algebras with a single 
binary operation.  We believe that many of our techniques
can be used for algebras with multiple operations of any arity,
but will not pursue this avenue here.

An {\em ordered term} on the variables \ 
$x_j, x_{j+1}, x_{j+2}, \ldots x_{j+k-1}$ is a $k$-ary
term where each variable appears once, in order of
their indices.  For clarity, we give an inductive
definition.  Any single variable $x_j$ is a $1$-ary
ordered term.  Now suppose that $f$ is an $m$-ary 
basic operation and that \ $t_1, \ldots t_m$ \ are 
ordered terms on the variables \ 
$x_j, x_{j+1}, \ldots x_{j+n-1}$ \ respectively.
(That is, $t_1$ is a $k_1$-ary ordered term on \ 
$x_j, x_{j+1}, \ldots x_{j+k_1-1}$, \ $t_2$ is a
$k_2$-ary ordered term on the next $k_2$ variables,
and so on, where \ $n = k_1 + k_2 + \cdots k_m$.) \ 
Then \ $f(t_1, t_2, \ldots t_m)$ \ is an $n$-ary
ordered term on the variables \ 
$x_j, x_{j+1}, \ldots x_{j+n-1}$. \ We used 
ordered terms in groupoids in our earlier papers
\cite{BraittSilberger} and \cite{BraittHobbySilberger} 
and called them {\em formal products}.

Focusing on groupoids with operation $\star$, we see
that there are exactly $5$ different ordered terms on the
$4$ variables \ $x_1, x_2, x_3,x_4$.  They are:
\ $((x_1 \star x_2) \star x_3) \star x_4$, \ 
\ $(x_1 \star (x_2 \star x_3)) \star x_4$, \ 
\ $(x_1 \star x_2) \star (x_3 \star x_4)$, \ 
\ $x_1 \star ((x_2 \star x_3) \star x_4)$ \ and 
\ $x_1 \star (x_2 \star (x_3 \star x_4))$. \ 
As is well known (see \cite{Stanley}), a groupoid has \ 
$C(k-1) = (2k-2)! / {k! (k-1)!}$ \ many distinct
ordered terms on $k$ variables, where \ 
$C(m)$ \ is the $m$-th Catalan number.

Assume \ $k \geq 3$. \ Let \ $s(x_1, \ldots x_k)$ \ 
and \ $t(x_1, \ldots x_k)$ \ be distinct terms
of some groupoid \ $\langle G , \star \rangle$. \ 
If  \ $s(x_1, \ldots x_k) \neq t(x_1, \ldots x_k)$ \ 
for all \ $x_1, x_2, \ldots x_k \in G$, then we say
that $G$ {\em separates $s$ and $t$}.  The groupoid \ 
$\langle G , \star \rangle$ \ is 
{\em $k$-antiassociative} iff it separates all the
distinct pairs of ordered terms on \ 
$x_1, x_2, \ldots x_k$. \ 

Two observations are in order.  If $G$ is a groupoid
that separates two terms $s$ and $t$, then every 
subgroupoid of $G$ also separates $s$ and $t$.  Second,
suppose $G$ is a groupoid that separates $s$ and $t$,
and let $H$ be an arbitrary groupoid (with the same
operation symbol).  Then the Cartesian product \ 
$G \times H$ \ separates $s$ and $t$.

There are infinite groupoids that are
$k$-antiassociative for all $k$.  
One example is \ 
$\langle F^\sigma;\odot\rangle$, \ the set of all
formal products under a natural operation which is 
similar to concatenation. (See \cite{BraittSilberger} for a 
definition and proof.)  The free groupoid
(see \cite{BurrisSankappanavar}) on one or more 
generators is another example, as can be shown by a modification
of the proof of Theorem \ref{antiassociative theorem}.  
(At the end of the proof, where Theorem \ref{cover theorem}
is invoked, one argues directly instead.)

There are no finite groupoids
which are $k$-antiassociative for all $k$, since 
the number of $k$-ary ordered terms increases without
bound.  Once there are more terms than elements in
the groupoid, the Pigeonhole Principle implies that
there are terms which will not be separated in the groupoid.
This brings us to the following question, which we 
posed in \cite{BraittHobbySilberger}.

\begin{question}\label{question1}
\ For all \ $k \geq 3$, \ is there a finite
groupoid that is $k$--antiassociative?
\end{question}

By our observation above, this question may
be reduced to the following one.

\begin{question}\label{question2}
\ For each \ $k \geq 3$ \ and for all distinct ordered
terms $s$ and $t$ on \ $x_1, x_2, \ldots x_k$, \ is 
there a finite groupoid that separates $s$ and $t$?
\end{question}

An affirmative answer to the second question gives
an affirmative answer to the first.  To see this, 
assume that for all distinct ordered terms $s$ and $t$ 
on \ $x_1, x_2, \ldots x_k$, \ there is a finite groupoid 
\ $G_{s,t}$ \ that separates $s$ and $t$.  Then the product
of these groupoids separates all the $k$-ary ordered
terms, and is $k$-antiassociative.
The other direction is immediate, so the two questions
are equivalent.

Note also that whenever $3 \leq j < k$, a groupoid $\langle G, \star \rangle$
that is $k$-antiassociative is also $j$-antiassociative.  For suppose $s(x_1,x_2,\dots x_j)$
and $t(x_1,x_2,\dots x_j)$ are $j$-ary ordered terms that are not 
separated in $\langle G, \star \rangle$.
We let $r(x_{j+1},\dots x_k)$ be some fixed $(k-j)$-ary ordered term, and form 
$s'(x_1,x_2,\dots x_k) = s(x_1,x_2,\dots x_j) \star r(x_{j+1},\dots x_k)$ and
$t'(x_1,x_2,\dots x_k) = t(x_1,x_2,\dots x_j) \star r(x_{j+1},\dots x_k)$.  These are two
$k$-ary ordered terms that are not separated in $\langle G, \star \rangle$, a contradiction.

\S2 will present two preliminary examples.
We will answer Question \ref{question2} in the affirmative in
\S3, and generalize it to arbitrary groupoid terms
in \S4.  
\end{section}

\begin{section}{Preliminary examples}

We start with two simple constructions that often
yield groupoids separating two distinct $k$-ary
ordered terms.  The first is to simply take products
of deranged operations.  For example, define the 
operation $\Ltwo$ on the universe of $Z_2$ by setting
$x \Ltwo y = (x + 1) \mod 2$.  Then we have 
$(x \Ltwo y) \Ltwo z = (x+2) \mod 2$,  
$((x \Ltwo y) \Ltwo z) \Ltwo w = (x+3) \mod 2$, and so on.
The value of a term with leftmost variable $x$ is 
$(x+n) \mod 2$, where $n$ is the depth of $x$ in the term.
We also define $\Rthree$ on the universe of $Z_3$ by setting
$x \Rthree y = (y + 1) \mod 3$.  Similarly, we have that
the value of a term with rightmost variable $z$ is 
$(z+n) \mod 3$, where $n$ is the depth of $z$ in the term.

We consider the five possible $4$-ary 
ordered terms, which we list as follows: \ 

\noindent $t_1 = ((x_1 \star x_2) \star x_3) \star x_4$, \ 

\noindent $t_2 = (x_1 \star (x_2 \star x_3)) \star x_4$, \ 

\noindent $t_3 = (x_1 \star x_2) \star (x_3 \star x_4)$, \ 

\noindent $t_4 = x_1 \star ((x_2 \star x_3) \star x_4)$ \ and \ 

\noindent $t_5 = x_1 \star (x_2 \star (x_3 \star x_4))$. \ 

In $\langle Z_2, \Ltwo \rangle$, we have 
$t_1(w,x,y,z) = (w_1 + 3) \mod 2$, \ 
$t_2(w,x,y,z) =  (w_1 + 2) \mod 2$, \
$t_3(w,x,y,z) =  (w_1 + 2) \mod 2$, \
$t_4(w,x,y,z) =  (w_1 + 1) \mod 2$ and \
$t_5(w,x,y,z) =  (w_1 + 1) \mod 2$, \
so all the terms in $\{ t_1, t_4, t_5 \}$ are separated
from those in $\{ t_2 ,t_3 \}$ in this groupoid.  Similarly, 
the terms in the sets $\{ t_1 ,t_2 \}$, $\{ t_3 ,t_4 \}$
and $\{ t_5 \}$ are all separated from those in the
other sets in the groupoid $\langle Z_3, \Rthree \rangle$.
Continuing, all five terms are separated from each other in 
the product of the two groupoids.

The problem with this approach is that the value
of a term only depends on the depths of its leftmost and rightmost 
variables, so terms that have those two variables at the same depth can not be separated this way.

The next construction partially avoids this problem.
Suppose that \ 
$A = \langle A, + \rangle$ \ is an abelian group,
that $\alpha$ and $\beta$ are endomorphisms 
of \ $\langle A, + \rangle$, \ and that $c$ is a
fixed element of $A$.  We define an operation 
$\star$ on $A$ by setting \ 
$x \star y = \alpha(x) + \beta(y) + c$, \ and call
the groupoid \ $\langle A, \star \rangle$ \ the
{\em affine endomorphism groupoid} for $A$, $\alpha$, 
$\beta$ and $c$.  We denote this groupoid by \ 
$E(A,\alpha,\beta,c)$. \ 

As an example, suppose we want an affine endomorphism groupoid 
that separates the terms 
$s(v,w,x,y,z) = ((v \star w) \star (x \star y)) \star z$ and 
$t(v,w,x,y,z) = ((v \star (w \star x)) \star y) \star z$.
In both terms, $v$ has depth 3 and $z$ has depth 1, so the previous 
approach can't succeed.

In \ $E(A,\alpha,\beta,c)$, \ we get
$s(v,w,x,y,z) = 
((\alpha(v)+\beta(w)+c) \star (\alpha(x)+\beta(y)+c)) \star z
= (\alpha^2(v)+\alpha \beta(w)+\alpha(c)+\beta \alpha(x) +
\beta^2(y)+\beta(c)+c) \star z
= \alpha^3(v)+\alpha^2 \beta(w)+\alpha^2(c)+
\alpha \beta \alpha(x) + \alpha \beta^2(y)+
\alpha \beta(c)+\alpha(c) + \beta(z) + c$.
This is quite messy, so we make the simplifying assumptions
that \ $\alpha^3 = \alpha^2$, that \ $\beta^2 = \beta$, \ and that
$\alpha$ and $\beta$ commute.  This gives us $s(v,w,x,y,z) = 
\alpha^2(v)+\alpha^2 \beta(w)+
\alpha^2 \beta(x) + \alpha \beta(y)+ \beta(z)+
\alpha^2(c)+\alpha \beta(c)+\alpha(c)  + c$. \ 
And a similar calculation gives
$t(v,w,x,y,z) = 
\alpha^2(v)+\alpha^2 \beta(w)+
\alpha^2 \beta(x) + \alpha \beta(y)+ \beta(z)+
\alpha^2\beta(c)+\alpha^2(c)+\alpha(c)+c$. \

Observe that both terms have the identical portion \ 
$\alpha^2(v)+\alpha^2 \beta(w)+
\alpha^2 \beta(x) + \alpha \beta(y)+ \beta(z)$, 
and only differ in their constants.  (Our choice of simplifying 
assumptions was designed to do this.)  So we can separate
the terms by insuring that 
$\alpha^2(c)+\alpha \beta(c)+\alpha(c)+c$ and
$\alpha^2\beta(c)+\alpha^2(c)+\alpha(c)+c$ 
have different values.

Fortunately, there are $A$, $\alpha$, $\beta$ and $c$ 
that satisfy these conditions.  We may work over \ ${\mathbf Z}_2$, \ and consider $2 \times 3$
matrices with elements in \ ${\mathbf Z}_2$. \ This gives us that the 
group $A$ is isomorphic to \ ${\mathbf Z}_2^6$, \ a $64$-element group.
The desired actions of $\alpha$ and $\beta$ on $A$ can be 
realized by letting $\beta$ copy the top row of $A$ onto
the bottom row, and by letting $\alpha$ copy the left column 
of $A$ onto the middle column and the middle column onto the right column.

$$
\mbox{That is,   } \alpha \left[
\begin{array}{c c c}
	d & e & f \\
	g & h & i 
\end{array}
\right] = \left[
\begin{array}{c c c}
	d & d & e \\
	g & g & h 
\end{array}
\right]   
\mbox{ and  }  \beta \left[
\begin{array}{c c c}
	d & e & f \\
	g & h & i 
\end{array}
\right] = \left[
\begin{array}{c c c}
	d & e & f \\
	d & e & f 
\end{array}
\right] \mbox{.} 
$$
$$
\mbox{Finally, we take $c =$  } \left[
\begin{array}{c c c}
	1 & 0 & 0 \\
	0 & 0 & 0
\end{array}
\right] 
\mbox{. \ This gives \ } \alpha \beta (c) =
\left[
\begin{array}{c c c}
	1 & 1 & 0 \\
	1 & 1 & 0
\end{array}
\right] 
\mbox{ \ and}
$$
$$
\alpha^2 \beta (c) =
\left[
\begin{array}{c c c}
	1 & 1 & 1 \\
	1 & 1 & 1
\end{array}
\right] 
\mbox{, so \ }
s(\vec{0}) = 
\left[
\begin{array}{c c c}
	0 & 1 & 1 \\
	1 & 1 & 0 
\end{array}
\right] 
\mbox{and $t(\vec{0}) =$} \left[
\begin{array}{c c c}
	0 & 1 & 0 \\
	1 & 1 & 1
\end{array}
\right]   
\mbox{.}
$$

The above technique requires making assumptions
about $\alpha$ and $\beta$ in order to simplify the
expressions for the terms.  One has some latitude with the
assumptions.  For example, one may take \ 
$\alpha^{k+1} = \alpha^k$, or $\beta^{k+1} = \beta^k$
for any value of $k$, and no longer require that
$\alpha$ and $\beta$ commute.  But a point is reached
where that no longer helps.  We were unable to use the
above method to produce a groupoid that separated the
two $5$-ary terms \ 
$s = (x_1 \star (x_2 \star x_3)) \star (x_4 \star x_5)$ \ 
and \ 
$t = (x_1 \star x_2) \star ((x_3 \star x_4) \star x_5)$. \ 
(These terms are represented by trees in Figure \ref{fig:trees}.)

So we turn to another method, which we will present
in the next section.
\end{section}

\begin{section}{Finite $k$-antiassociative groupoids}
We will use a somewhat involved construction, and will require some
preliminary definitions.  Recall that a {\em full binary tree}
is a rooted tree where every internal node has exactly two 
children.  (For further definitions and theorems, see \cite{Knuth}
or a recent text in discrete mathematics or data structures.)

When full binary trees are used as data structures, the
two nodes directly below each internal node are called 
its {\em left} and {\em right} children, and the subtrees
with these children as roots are the {\em left} and {\em right subtrees}
of that node.  As is well known, groupoid terms correspond
to full binary trees with leaves labeled by variables.  
If $s$ is a groupoid
term, we will denote the corresponding tree by \ $T(s)$. \ 
This correspondence may be defined recursively as follows.  
If $s$ is a single variable $x_i$, then \ $T(s)$ \ is a
tree with one node, labelled $x_i$.  If $s$ and $t$ are 
groupoid terms, then \ $T(s \star t)$ \ is the tree with
a root that has \ $T(s)$ \ as its left subtree and 
\ $T(t)$ \ as its right subtree.

We will also label the nodes of binary trees with strings
made from the characters `$l$' and `$r$'.  As is usual, 
we will write the set of all such strings as \ $\{l,r\}^*$. \ 
In dealing with 
strings, we will show concatenation by simply writing 
the two strings next to each other.  We use $\Lambda$ to
denote the empty string, which is the identity for concatenation.  
Our labeling may 
be defined recursively as follows.  

The root is labeled $\Lambda$.
If a node is labeled $a$, then its left and right 
children are labeled \ $al$ \ and \ $ar$, \ respectively.
These labels may be thought of as directions for how to 
get to a node by starting at the root and turning the 
correct way at each branching.  

Given a string $p$, an {\em initial substring} of $p$ is
a string $q$ so that \ $p = qu$ \ for some string $u$.
(Note that the empty string $\Lambda$ is an initial substring
of every string.)
A substring is {\em proper} if it is not equal to the
entire original string, and {\em nontrivial} if it is
not equal to $\Lambda$.

Putting these two ideas together, occurrences of variables in 
a groupoid term $s$ correspond to leaves of \ $T(s)$. \ 
The string that is the label of the leaf corresponding to an 
occurrence of the variable $x_i$ will be called the {\em path} 
of that occurrence.  If $x_i$ only occurs once, we may also call 
this the path of $x_i$.
Generalizing this, for any subterm $b$ of $s$, we have that the 
{\em path} of $b$ is also the label of the interior node of \ 
$T(s)$ \ corresponding to the root of subtree $T(b)$.
  
\begin{figure}[h]
	\centering
		\includegraphics{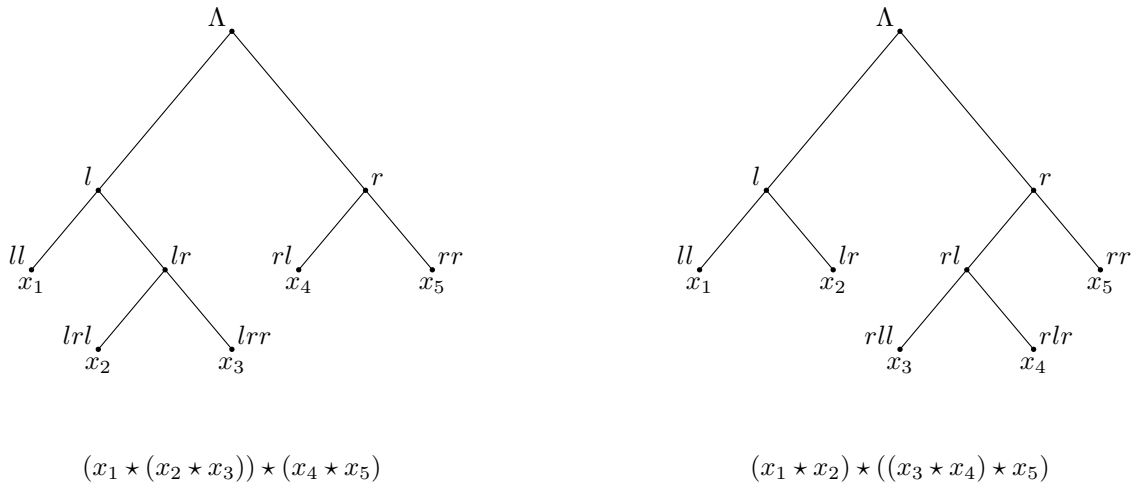}
	\caption{Trees for two terms}
	\label{fig:trees}
\end{figure}

For example, consider \ 
$s = (x_1 \star (x_2 \star x_3)) \star (x_4 \star x_5)$. \ 
We have \ $\path(x_1) = ll$, \ $\path(x_2) = lrl$, \ 
$\path(x_3) = lrr$, \ $\path(x_4) = rl$, \ $\path(x_5) = rr$ \ 
and \ $\path(x_2 \star x_3) = lr$. \ 
(When there is danger of confusion,
we will write \ $\path_s(x_i)$ \ to show we mean the
path in the term $s$.)  The tree for this term is on the
left side of Figure \ref{fig:trees}.

If $s$ is a groupoid term, we use \ $\Paths(s)$ \ for the
set of all paths to variables in $s$.  Similarly, we have
paths to the internal nodes of the tree \ $T(s)$; \ these
correspond to proper initial substrings of paths to the
leaves of \ $T(s)$. \ Given the term $s$ with $q$ the path 
to a node of $T(s)$, we let $s_q$ denote the subterm of $s$ 
with $T(s_q)$ rooted at the node $q$ of $T(s)$.
Then for any subterm $b$ of $s$, if we let $q$ be the path
of $b$ in $s$, we have $b = s_q$.

Our long-term goal is to form a groupoid that separates
any two distinct $k$-ary ordered terms $s$ and $t$.  
We will need some preliminary ideas in order to do this.
Our groupoids will have elements which are vectors
of finite length over the $2$-element field ${\mathbf Z}_2$.
We take the index set of the components of these vectors
to be the set of natural numbers
 \ ${\mathbf N} = \{0,1,2, \dots \}$. \  All of our vectors
 will only have a finite number of components, or equivalently,
 will be vectors indexed by ${\mathbf N}$ that are zero in all 
 but finitely many components.  Given any finite set of 
 such vectors, we let $M$ be the set of all indices where any
 of them have nonzero components.  Then all these vectors lie in the finite
 subspace consisting of vectors with all their
 components outside 
 of $M$ equal to $0$.  We will usually leave this final 
 reduction to a finite groupoid to the reader.
 
 We will actually be using only the additive structure of
 the field 
 ${\mathbf Z}_2$
 , and viewing it as an abelian 
 group.  Our groupoids will all be affine endomorphism 
 groupoids, although the endomorphisms will be built up from 
 their actions on the components of vectors.  One nice 
 consequence of this is that we will be able to add groupoid
 operations pointwise.  If $\star_1$ and $\star_2$ are two
 groupoid operations on vectors over ${\mathbf Z}_2$, their
 sum \ $\star_1 + \star_2$ \ will be defined by \ 
$\vec{x} \,(\star_1 + \star_2)\, \vec{y} = (\vec{x} \star_1 \vec{y}) + 
(\vec{x} \star_2 \vec{y})$. \ Since we are working over
${\mathbf Z}_2$, all additions of values such as the above are 
done modulo $2$.  We will periodically note this fact, but not always.

We will define groupoid operations by their actions on components.
In this section we will use the convention that the vectors $x$, $y$ and $z$ are such 
that \ $z = x \star y$ \ for our groupoid operation $\star$.  We
will also simply write $x$ instead of $\vec{x}$, and write
$x[a]$ for the $a$-th component of the vector $x$. (For clarity, we 
will always use square brackets for this.)  To specify a
groupoid operation, it then suffices to say what $z[i]$ is for all
$i$.  We will do this by giving a sequence of equations for the
$z[i]$.  To emphasize that values are being assigned to the $z[i]$,
we will use $:=$ instead of the normal equality symbol.  One further
convention is that each $z[i]$ will be zero, unless that $z[i]$ is 
explicitly assigned a value.

For example, consider the groupoid operation which we will later 
call \ $\left\|2,lr,0\right\|$. \  We define it by the two equations \ 
$z[0] := x[a]$ \ and $z[a] := y[2]$. \  The only indices mentioned
are $0$, $a$ and $2$, so we can focus on just those three components,
and view our vectors as $3$-tuples.  Writing our operation as $\star$,
we have \ $\langle x[0],x[a],x[2] \rangle \star 
\langle y[0],y[a],y[2] \rangle = \langle x[a],y[2],0 \rangle$. \ 
Continuing to use $\star$ for this operation, consider the term \ 
$s = (u \star v) \star w$.  We have \ 
$u \star v = \langle u[a],v[2],0 \rangle$, \ and \ 
$(u \star v) \star w = \langle u[a],v[2],0 \rangle \star 
\langle w[0],w[a],w[2] \rangle = \langle v[2],w[2],0 \rangle$. \ 
The $0$-th component of $s$ is the $2$nd
component of $v$, where $\path_s(v) = lr$.  \ This motivates calling
the operation \ $\left\|2,lr,0\right\|$. \ 

When using the operation $\left\|2,lr,0\right\|$, we will 
be looking only at the $0$-th component of the output,
and ignoring the $a$-th component.  With this 
understanding, it makes little difference what the
index $a$ is.
  So we will assume that indices
such as $a$, $b$ and so on are always chosen to minimize 
{\em collisions}.  This means that no indices will be equal unless
they are explicitly represented with equivalent expressions.
This can be easily achieved by appropriate choices of values for
$a$, $b$ and so on, and will not jeopardize the finiteness of
any groupoids we produce.  As long as there are no collisions,
groupoids obtained for different values of $a$ will be isomorphic.
Accordingly, we will speak of {\em the} groupoid operation \ 
$\left\|2,lr,0\right\|$, \ and so on.

\begin{df}\label{basic operation def}
Let \ $p = p_0p_1p_2 \cdots p_j$ \ be a nonempty string 
in \ $\{l,r\}^*$, \ 
and let $m$ and $n$ be natural numbers.  Then the 
operation \ $\left\|m,p,n\right\|$ \ is defined via the following
equations,
where we assume that $a,a+1,\dots a+j+1$ are distinct
from $m$ and $n$.  If $p_0$ is $l$, the first equation
is  \ $z[n] := x[a]$, and if $p_0$ is $r$, it is
\ $z[n] := y[a]$.  \ If \ $p_1 = l$, \ the next equation
is  \ $z[a] := x[a+1]$, and if \ $p_1 = r$, \ it is
\ $z[a] := y[a+1]$.  This pattern continues, with
\ $z[a+i] := x[a+i+1]$ \ if \ $p_{i+1} = l$ \ or \ 
$z[a+i] := y[a+i+1]$ \ if \ $p_{i+1} = r$, \ for 
all \ $i \leq j-2$. \ The last equation is \ 
$z[a+j-1] := x[m]$ \ if \ $p_{j} = l$ \ and it is \ 
$z[a+j-1] := y[m]$ \ if \ $p_{j} = r$. \ 
\end{df}

The idea is that \ $\left\|m,p,n\right\|$ \ transfers the value
of the $m$-th component of the vector with path $p$ in 
the term $s$ to the $n$-the component of the result 
of $s$, with as few side effects as possible.  We
are assuming that none of the indices used to define
\ $\left\|m,p,n\right\|$ \ is equal to any of the others, except that
possibly \ $m = n$. \  In 
other words, the operation \ $\left\|m,p,n\right\|$ \ is 
{\em duplicate free}.  If $m_1$ is distinct from both
 $m_2$ and $m_0$, and $p$ and $q$ are strings in \ 
$\{l,r\}^*$, \ then the operation \ 
$\left\|m_2,q,m_1\right\| + \left\|m_1,p,m_0\right\|$ \ is duplicate free 
by our convention that indices are chosen to minimize
collisions.  In isolation, the sum \ 
$\left\|m_2,q,m_1\right\| + \left\|m_1,p,m_0\right\|$ \ is equivalent to \ 
$\left\|m_2,pq,m_0\right\|$. \ The one difference is that the
former explicitly mentions the index $m_1$.  We will henceforth assume
that all our groupoid operations are duplicate free.

\begin{lemma}\label{path lemma}
Let $\star$ be a duplicate and collision free groupoid operation that
contains \ $\left\|m,p,n\right\|$ \ as a summand, and let $s$ be a 
groupoid term where $p$ is the
path to a node of $T(s)$.  Letting $s_p$ be the subterm
of $s$ at that node, \ $s[n] = s_p[m]$ for all values of the variables of $s$. \ 
\end{lemma}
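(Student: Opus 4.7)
My plan is to prove Lemma~\ref{path lemma} by induction on the length $j+1$ of the path $p = p_0 p_1 \cdots p_j$. The key structural observation is that the defining equations of $\|m,p,n\|$ decompose naturally into (i) a single ``head'' equation $z[n] := x[a]$ or $z[n] := y[a]$ that takes one step along the path from the root of $T(s)$, and (ii) the remaining $j$ equations, which are exactly (up to harmless renaming of the fresh intermediate indices $a+1, \ldots, a+j-1$) the defining equations of the shorter summand $\|m, p_1 p_2 \cdots p_j, a\|$. The duplicate-free and collision-free hypotheses do the essential bookkeeping: they guarantee that the slots $n, a, a+1, \ldots, a+j-1$ are not read or written by any other summand of $\star$, so the contribution of $\|m,p,n\|$ to $s[n]$ can be analyzed in complete isolation.

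For the base case $|p| = 1$, the summand $\|m, p_0, n\|$ reduces to a single equation, which is $z[n] := x[m]$ if $p_0 = l$ and $z[n] := y[m]$ if $p_0 = r$. Writing $s = s_l \star s_r$, the collision-free hypothesis ensures that no other summand of $\star$ writes into component $n$, so evaluating $\star$ gives directly $s[n] = s_{p_0}[m] = s_p[m]$. For the inductive step, suppose the result holds for paths of length at most $j$, and let $p$ have length $j+1 \geq 2$. WLOG $p_0 = l$, so $p_1 \cdots p_j$ is a valid path in $T(s_l)$ leading to the subterm $s_p$. The head equation of $\|m, p, n\|$ gives $s[n] = s_l[a]$, with the collision-free hypothesis again ruling out any other contribution to component $n$. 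The tail equations of $\|m, p, n\|$ coincide, up to renaming of fresh indices, with the defining equations of $\|m, p_1 \cdots p_j, a\|$, and no other summand of $\star$ touches any of $a, a+1, \ldots, a+j-1$. Hence when $s_l$ is evaluated by repeated applications of $\star$, its $a$-th component is determined exactly as if $\|m, p_1 \cdots p_j, a\|$ were itself a summand of $\star$. Applying the induction hypothesis to $s_l$ and the shorter path $p_1 \cdots p_j$ yields $s_l[a] = (s_l)_{p_1 \cdots p_j}[m] = s_p[m]$, so $s[n] = s_p[m]$, as required.

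The main obstacle is making precise the assertion that the tail equations of $\|m, p, n\|$ act on the computation of $s_l$ exactly as an independent summand $\|m, p_1 \cdots p_j, a\|$ would. This is a purely bookkeeping issue about how the fresh indices $a, a+1, \ldots, a+j-1$ are insulated from the rest of $\star$ by collision-freeness, and how the additive structure over $\mathbf{Z}_2$ lets us ignore all other summands when computing these particular components. It may be worth articulating this as a brief preliminary observation before running the induction, so that the recursive descent into $s_l$ can cleanly invoke the induction hypothesis applied to the ``same'' groupoid operation $\star$ with the shorter summand singled out.
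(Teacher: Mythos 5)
Your argument is correct, and it is the same kind of induction on the length of $p$ that the paper uses, but it runs the recursion from the opposite end of the path. The paper peels off the \emph{last} character, writing $\left\|m,p,n\right\|$ as $\left\|m,p_j,b\right\| + \left\|b,p_0p_1\cdots p_{j-1},n\right\|$, applies the induction hypothesis to the \emph{same} term $s$ with the shorter prefix $q = p_0\cdots p_{j-1}$ to get $s[n] = s_q[b]$, and then uses the one remaining assignment to carry $s_q[b]$ down to $s_p[m]$. You peel off the \emph{first} character: the head equation gives $s[n] = s_{p_0}[a]$, and the induction hypothesis is then applied to the \emph{subterm} $s_{p_0}$ with the suffix path $p_1\cdots p_j$, so your induction shrinks the term at each step while the paper's keeps the term fixed and shortens only the path. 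Both are instances of the paper's own observation that $\left\|m_2,q,m_1\right\| + \left\|m_1,p,m_0\right\|$ is equivalent to $\left\|m_2,pq,m_0\right\|$, and both lean on duplicate- and collision-freeness in the same way to insulate the chain of intermediate components. The bookkeeping point you flag at the end --- that the tail equations of $\left\|m,p,n\right\|$ literally constitute the summand $\left\|m,p_1\cdots p_j,a\right\|$ with fresh indices shifted by one, so that after regrouping summands the induction hypothesis applies to $s_{p_0}$ with the same operation $\star$ --- is exactly what needs to be said, and once it is stated your proof is complete; neither direction of the induction buys anything substantial over the other.
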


\begin{proof}
Since $\star$ is duplicate and collision free, the only summand of $\star$
that affects the value of $s[n]$ is $\left\|m,p,n\right\|$.  So we may ignore
the rest of $\star$, and assume $\star$ is $\left\|m,p,n\right\|$.
Letting \ $p = p_0p_1p_2 \cdots p_j$, \ 
we will prove the lemma by induction on $j$.   
Our basis is when $j = 0$, making the operation \ 
$\left\|m,p_0,n\right\|$. \  We will do the case where \ $p_0 = l$, \ 
the one for \ $p_0 = r$ \ is similar.  Now \ 
$s = s_l \star s_r$, \ where $\star$ is \ $\left\|m,l,n\right\|$. \ 
The one relevant assignment is \ $z[n] := x[m]$, \ giving
\ $s[n] = z[n] = x[m] = s_l[m]$, \ as desired.

For the induction step, assume the statement is true
for \ $j-1$, \ and that we want to show it for the
path \ $p = p_0p_1p_2 \cdots p_j$. \  We write \ 
$\star = \left\|m,p,n\right\|$ \ as \ 
$\left\|m,p_j,b\right\| + \left\|b,p_0p_1p_2 \cdots p_{j-1},n\right\|$ \ 
for some new index $b$, and let $q$ be \ 
$p_0p_1 \cdots p_{j-1}$, \ so \ $p = qp_j$. \ 
By the statement for \ $j-1$, \ $s[n] = s_q[b]$.  \   We have \ 
$s_q[b] = (s_{ql} \star s_{qr})[b] = (s_{ql} \,\, \left\|m,p_j,b\right\| \,\, s_{qr})[b]$, \ 
where the last step follows because indices are chosen to 
minimize collisions.  There are now two cases.  
We will do the one for \ $p_j = r$; \ 
the case for \ $p_j = l$ \ is similar.
Since \ $p_j = r$, \ we have \ $z[b] := y[m]$ \ in
\ $\left\|m,p_j,b\right\|$. \ So \ $s_q[b] = s_{qr}[m] = s_p[m]$, \ 
since \ $qr = qp_j = p$. \ Thus \ 
$s[n] = s_q[b] = s_p[m]$, as desired.
\end{proof}

Given the groupoid operation \ $\left\|m,p,n\right\|$, \ we define
the {\em tweaked} operation \ $\left\|m,p,n\right\|'$ \ to be identical
to \ $\left\|m,p,n\right\|$ \ except for one assignment.  Writing $p$
as \ $p_0q$, \ $\left\|m,p,n\right\|$ \ has an assignment of the form \ 
$z[n] := x[k]$ \ if \ $p_{0} = l$ \ and one of the form \ 
$z[n] := y[k]$ \ if \ $p_{0} = r$. \ Whichever one occurs,
we modify it by adding $1$, giving \ 
$z[n] := (x[k] + 1) \mod 2$ \ if \ $p_{0} = l$ \ or giving  \ 
$z[n] := (y[k] + 1) \mod 2$ \ if \ $p_{0} = r$. \ 

A slight modification of the proof of the previous lemma
then establishes the following.

\begin{lemma}\label{tweaked lemma}
Let $\star$ be a duplicate and collision free groupoid operation that
contains \ $\left\|m,p,n\right\|'$ \ as a summand, and let $s$ be a 
groupoid term where $p$ is the
path to a node of $T(s)$.  Letting $t = s_p$ be the subterm
of $s$ at that node, \ $s[n] = (t[m] + 1) \mod 2$. \ 
\end{lemma}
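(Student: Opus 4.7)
The plan is to mirror the induction in the proof of Lemma \ref{path lemma}, tracking the single $+1 \bmod 2$ that the tweak introduces. As in that proof, duplicate and collision freeness let me discard every summand of $\star$ other than $\|m,p,n\|'$, so I may assume $\star = \|m,p,n\|'$. Write $p = p_0 p_1 \cdots p_j$ and induct on $j$.

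For the base case $j=0$, the operation $\|m,p_0,n\|'$ consists of the single tweaked assignment $z[n] := (x[m]+1) \bmod 2$ if $p_0 = l$ or $z[n] := (y[m]+1) \bmod 2$ if $p_0 = r$. In either case $s[n] = (s_{p_0}[m]+1) \bmod 2 = (t[m]+1) \bmod 2$, which is what we want.

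For the induction step, the essential observation is that the tweak affects only the single assignment to $z[n]$, i.e.\ the very first equation in the definition of $\|m,p,n\|$. Consequently, in the decomposition
\[
\|m,p,n\|' \;=\; \|m,p_j,b\| \;+\; \|b,\,p_0 p_1 \cdots p_{j-1},\,n\|'
\]
used in the previous proof, the prime (the tweak) attaches to the second summand, not to the first, since it is the second summand that produces $z[n]$. Letting $q = p_0 p_1 \cdots p_{j-1}$ so that $p = q p_j$, the inductive hypothesis applied to $\|b,q,n\|'$ gives $s[n] = (s_q[b]+1) \bmod 2$, while Lemma \ref{path lemma} applied to the untweaked summand $\|m,p_j,b\|$ gives $s_q[b] = s_{q p_j}[m] = s_p[m] = t[m]$. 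Combining the two yields $s[n] = (t[m]+1) \bmod 2$.

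There is no real obstacle here; the only thing to be careful about is the bookkeeping in the decomposition, specifically that the prime migrates to the correct summand. Once one sees that the outermost assignment is both the tweaked one and the one that is ``peeled off last'' in the decomposition used in Lemma \ref{path lemma}, the rest of the argument is essentially identical to that lemma, with a single $+1 \bmod 2$ carried along through the induction.
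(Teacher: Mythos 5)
Your proof is correct and is exactly the ``slight modification of the proof of Lemma \ref{path lemma}'' that the paper gestures at without writing out: you correctly identify that the tweak lives in the assignment to $z[n]$, hence attaches to the summand $\left\|b,p_0\cdots p_{j-1},n\right\|$ in the decomposition, and the single $+1 \bmod 2$ carries through the induction. Nothing further is needed.
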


We are now ready to establish a powerful theorem, 
which holds for all groupoid terms regardless of any
conditions on the order or number of appearances of 
variables.

\begin{theorem}\label{cover theorem}
Let $s$ and $t$ be any groupoid terms.  Suppose 
the variable $x$ has an occurrence in $s$ where the
path to that occurrence is $p$, and that $x$ has an
occurrence in $t$ where the path to that occurrence
is $q$.  Then if $q$ is a proper initial substring
of $p$, the terms $s$ and $t$ can be separated.
\end{theorem}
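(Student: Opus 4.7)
My plan is to construct a groupoid $\langle G, \star \rangle$ whose operation $\star$ is a sum of two basic operations from Definition \ref{basic operation def}, and then to read off the values of $s$ and $t$ at a single output coordinate via Lemmas \ref{path lemma} and \ref{tweaked lemma}. The asymmetry I want to exploit is that since $q$ is a \emph{proper} initial substring of $p$, the subterm $s_q$ sits at an internal node of $T(s)$ and is therefore computed by $\star$, whereas $t_q = x$ is merely a leaf whose value is the input vector $x$, never touched by any application of $\star$. Once I arrange for any value that $\star$ computes at coordinate $m$ to be off by $1$ from a ``raw'' leaf value, that asymmetry forces $s$ and $t$ to disagree somewhere.

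Write $p = qu$ with $u$ nontrivial and assume first that $q$ is also nonempty. Take $\star = \left\|m, q, n\right\| + \left\|m, u, m\right\|'$, with $m \neq n$ and with disjoint fresh scratch indices for the two summands, so that $\star$ is duplicate and collision free. I think of the first summand as a \emph{reader} that copies whatever lies at coordinate $m$ of the subterm at path $q$ into coordinate $n$ of the root, and of the second summand as a \emph{tweaker} that at every internal node overwrites coordinate $m$ by running the chain for $u$ on the local subtree and adding $1$ at the top.

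The two computations then split nicely. For $t$, Lemma \ref{path lemma} with the first summand gives $t[n] = t_q[m]$, and since $t_q = x$ is a leaf the second summand never acts on it, so $t[n] = x[m]$. For $s$, Lemma \ref{path lemma} similarly gives $s[n] = s_q[m]$, but now $s_q$ is itself computed by $\star$: viewing $s_q$ as a standalone term (in $T(s_q)$ the string $u$ is a path to the leaf $x$) and invoking Lemma \ref{tweaked lemma} with the second summand, I get $s_q[m] = ((s_q)_u[m] + 1) \mod 2 = (x[m] + 1) \mod 2$. Hence $s[n] = (x[m] + 1) \mod 2$ while $t[n] = x[m]$, and these are never equal. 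The degenerate case $q = \Lambda$, in which $t$ is the single variable $x$, is handled by using $\star = \left\|m, p, m\right\|'$ alone and reading coordinate $m$: this yields $s[m] = (x[m] + 1) \mod 2$ while $t[m] = x[m]$. The usual reduction described before Definition \ref{basic operation def} cuts the resulting vector space over $\mathbf{Z}_2$ down to a finite groupoid.

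The step that needs the most care is verifying non-interference of the two summands, so that Lemmas \ref{path lemma} and \ref{tweaked lemma} can each be applied to its summand in isolation. This reduces to two observations: the first summand reads coordinate $m$ but never writes to it, so it cannot alter $s_q[m]$ or $t_q[m]$; and the second summand writes only to coordinate $m$ and to its own scratch indices, so it cannot contaminate the output at coordinate $n$. The one conceptual move beyond the proofs of the two lemmas is that I apply Lemma \ref{tweaked lemma} not to $s$ itself but to the subterm $s_q$, which is legitimate because the inductive argument behind that lemma goes through for any groupoid term.
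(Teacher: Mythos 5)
Your proof is correct and follows essentially the same route as the paper's: the paper takes $\star = \left\|1,q,0\right\| + \left\|1,w,1\right\|'$, which is exactly your $\left\|m,q,n\right\| + \left\|m,u,m\right\|'$ with $m=1$, $n=0$, and then computes $t[0]=x[1]$ versus $s[0]=x[1]+1$ via Lemmas \ref{path lemma} and \ref{tweaked lemma} just as you do. Your separate treatment of the degenerate case $q=\Lambda$ (where Definition \ref{basic operation def} would not supply an operation $\left\|m,\Lambda,n\right\|$) is a small point of extra care that the paper's proof glosses over.
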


\begin{proof}
Let $s$, $t$, $x$, $p$ and $q$ be as above.  By 
hypotheses, \ $p = qw$ \ for a nonempty string $w$.
We let $\star$ be \ $\left\|1,q,0\right\| + \left\|1,w,1\right\|'$. \ 

First consider the value of $t[0]$ for this $\star$.
Since \ $\left\|1,w,1\right\|'$ \ 
does not have an assignment to $z[0]$, \ $\left\|1,w,1\right\|'$
makes \ $t[0] = 0$, \ and we can ignore it.
As for  \ $\left\|1,q,0\right\|$, \ Lemma \ref{path lemma} gives \ 
$t[0] = t_q[1] = x[1]$. \  This implies that $\star$ 
sets $t[0] = x[1].$

Now consider the value of $s[0]$ for the above $\star$.
As in our calculation for $t[0]$, we have \ $s[0] = s_q[1]$. \ 
But now $s_q$
is a nontrivial subterm of $s$, so we compute \ $s_q[1]$. \ 
The operation \ $\left\|1,q,0\right\|$ has no effect on \ $s_q[1]$, \ 
so we ignore it and just consider the effect of \ 
$\left\|1,w,1\right\|'$. \  It gives \ $s_q[1] = s_{qw}[1] + 1$, \ by Lemma
\ref{tweaked lemma}.  Putting these together, we have \ 
$s[0] = s_q[1] = s_{qw}[1] + 1 = s_p[1] + 1 = x[1] + 1$. \ 
This shows that $s$ and $t$ always have different values
in a finite groupoid, since it is always true
that \ $s[0] \neq t[0]$.
\end{proof}  
 
\begin{theorem}\label{antiassociative theorem}
\ For all \ $k \geq 3$ \ there is a $k$-antiassociative finite groupoid.
\end{theorem}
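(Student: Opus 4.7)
The plan is to reduce the theorem, via the observations at the end of the introduction, to separating a single pair of distinct ordered $k$-ary terms in some finite groupoid; the Cartesian product of the resulting groupoids over all pairs will then be $k$-antiassociative. For each pair $s \neq t$ I will apply Theorem \ref{cover theorem}, so the real task is to exhibit a variable $x_i$ whose path in one of $T(s)$, $T(t)$ is a proper initial substring of its path in the other.

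The combinatorial core is therefore the following lemma: for any two distinct ordered $k$-ary terms $s$ and $t$, there exists a variable $x_i$ such that one of $\path_s(x_i)$, $\path_t(x_i)$ is a proper initial substring of the other. To prove it I would associate to each ordered term its depth sequence $(d_1,\dots,d_k)$, where $d_j$ is the depth of $x_j$ in the corresponding tree. A short induction shows that this sequence reconstructs the leaf paths: set $\pi_1 = l^{d_1}$, and given $\pi_j = \alpha \cdot l \cdot r^m$ (the decomposition of $\pi_j$ at its last $l$, which exists whenever $j < k$ because then $x_j$ is not the rightmost leaf), put $\pi_{j+1} = \alpha \cdot r \cdot l^{d_{j+1}-d_j+m}$. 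Consequently distinct ordered terms have distinct depth sequences. Let $i$ be the smallest index at which the depth sequences of $s$ and $t$ disagree, and assume WLOG that $d_s(x_i) < d_t(x_i)$. For $i=1$ the two paths are $l^{d_s(x_1)}$ and $l^{d_t(x_1)}$, so one is immediately a proper initial substring of the other. For $i \geq 2$, the reconstruction forces $\path_s(x_j) = \path_t(x_j)$ for all $j < i$, so $\path_s(x_i)$ and $\path_t(x_i)$ are both built from the same prefix $\alpha \cdot r$ and differ only in the number of trailing $l$'s; the shorter is again a proper initial substring of the longer.

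With the lemma in hand, the theorem follows at once. For each pair $(s,t)$ of distinct $k$-ary ordered terms the lemma supplies a variable $x_i$, and applying Theorem \ref{cover theorem} with the shorter of the two paths as $q$ and the longer as $p$ yields a finite groupoid $G_{s,t}$ separating $s$ from $t$. The finite product of all the $G_{s,t}$ (as noted in the introduction, products preserve separation) is then a single finite groupoid that separates every pair of distinct $k$-ary ordered terms, hence is $k$-antiassociative.

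The hardest step is the combinatorial lemma. A naive structural induction on $k$ that writes $s = s_1 \star s_2$, $t = t_1 \star t_2$ disposes of the case where $s_1$ and $t_1$ have the same number of leaves by recursion and prepending $l$ or $r$, but it stalls when those sizes differ: the variable sitting at the boundary between $s_1$ and $s_2$ then has paths beginning with opposite letters in the two trees, so no prefix relation is directly available from the root decomposition. Passing to the global depth sequence avoids this difficulty, since the first disagreement in depths automatically yields two paths that share all earlier structure and then differ only in length, which is precisely the hypothesis required by Theorem \ref{cover theorem}.
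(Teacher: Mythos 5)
Your proposal is correct, and its overall architecture matches the paper's: reduce to separating each pair of distinct ordered terms, locate a variable whose path in one term is a proper initial substring of its path in the other, and invoke Theorem \ref{cover theorem}, finishing with a product of the resulting groupoids. Where you diverge is in how the prefix property is established. The paper takes $x_m$ to be the leftmost variable whose \emph{paths} in $s$ and $t$ differ and proves the prefix claim by a structural induction, decomposing $s = s_l \star s_r$ and $t = t_l \star t_r$ at the root and recursing into the matching subterms. You instead encode each ordered term by its depth sequence $(d_1,\dots,d_k)$, prove via the explicit recursion $\pi_{j+1} = \alpha\, r\, l^{\,d_{j+1}-d_j+m}$ (where $\pi_j = \alpha\, l\, r^m$ is the split at the last $l$) that the depth sequence determines all leaf paths, and then read the prefix relation off the first index where the depth sequences disagree: the two paths share the prefix $\alpha r$ and differ only in trailing $l$'s. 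Both arguments are sound; I checked that your reconstruction formula is correct (the exponent $d_{j+1}-d_j+m = d_{j+1}-|\alpha|-1$ is nonnegative because the next leaf in inorder is the leftmost leaf of the right subtree rooted at $\alpha r$) and that the needed decomposition of $\pi_{i-1}$ at its last $l$ exists since $i-1 < k$. Your route buys the additional fact that an ordered term is determined by the depths of its variables, which nicely explains why the $\Ltwo$/$\Rthree$ construction of \S2 (which only sees the depths of the extreme variables) falls short while a full reading of the depth sequence cannot; the paper's structural induction is somewhat shorter and avoids introducing the encoding, but yields only the prefix claim itself.
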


\begin{proof}
It is enough to produce a finite groupoid that 
separates any two distinct $k$-ary ordered terms 
$s$ and $t$.  Given any two distinct terms $s$ and $t$ 
with $k \geq 3$, we let $x_m$ be the leftmost variable on which 
$s$ and $t$ do not {\em agree}, in the sense that \ 
$\path_s(x_i) = \path_t(x_i)$ \ for all \ 
$i < m$, \ and \ $\path_s(x_m) \neq \path_t(x_m)$. \ 

We claim that for any two such distinct $k$-ary 
terms $s$ and $t$, one of \ $\path_s(x_m)$ \ 
or \ $\path_t(x_m)$ \ is a proper initial substring of 
the other.  The proof is by induction on $j$, where
$j$ is the minimum of the lengths of $\path_s(x_m)$ and
$\path_t(x_m)$.  If $j = 0$, then either $s = x_m$ or
$t = x_m$.  
Without loss of generality, assume $s = x_m$.
Then $\path_s(x_m) = \Lambda$.  If $\path_t(x_m)$ is also
$\Lambda$, we have $s = x_m = t$, a contradiction.  So
$\path_t(x_m) \neq \Lambda$, and $\path_t(x_m)$ has 
$\path_s(x_m)$ as a proper initial substring.  This establishes 
the basis case.

For the induction step, suppose that the claim is true
for $j-1$, and that we want to prove it for $j$.  We have
that \ $s = s_l \star s_r$, \ and \ $t = t_l \star t_r$.  
We have two cases, depending on where $x_m$ occurs.

If $x_m$ occurs in $s_r$, then $x_m$ also occurs in $t_r$
since \ $s_l = t_l$ \ because $s$ and $t$ agree for all \ 
$i < m$. \ But then $x_m$ is the leftmost variable on 
which $s_r$ and $t_r$ disagree, so one of \ $\path_{s_r}(x_m)$ \ 
and \ $\path_{t_r}(x_m)$ \ is a proper initial substring of 
the other, by the statement for $j-1$.  
Since \ $\path_s(x_m)$ \ 
and \ $\path_t(x_m)$ \ are obtained from these paths by adding
$r$ to the start, one of them is also a proper initial 
substring of the other.

So suppose $x_m$ occurs in $s_l$.  As in the previous paragraph,
if $x_m$ occurred in $t_r$, we would get that $x_m$ occurred in $s_r$.
Thus $x_m$ occurs in $t_l$.  Then $x_m$ is the leftmost variable on 
which $s_l$ and $t_l$ disagree, and one of \ $\path_{s_l}(x_m)$ \ 
and \ $\path_{t_l}(x_m)$ \ is a proper initial substring of 
the other.  Adding $l$ to the start of these paths gives
\ $\path_s(x_m)$ \ and \ $\path_t(x_m)$, \ so one of them is 
a proper initial substring of the other.  This proves the claim.

Now let distinct $k$-ary $s$ and $t$ with $k \geq 3$ be given.
The claim gives us that one of \ 
$\path_t(x_m)$ \ and $\path_s(x_m)$ is a proper initial 
substring of the other.\
\ We apply Theorem \ref{cover theorem}, and
obtain a finite groupoid that separates $s$ and $t$.
\end{proof}
\end{section}

\begin{section}{Separating arbitrary groupoid terms}

We can generalize
the questions of the previous section, by relaxing 
the condition that each variable appears
once in every term in order of their indices.  

As before, we can reduce everything to the problem of
finding finite algebras that separate pairs of terms.
(Theorem \ref{TFAE theorem} uses free algebras to 
give us a condition for when infinite algebras 
exist that separate a pair of terms.)

If we try to separate the two groupoid terms \ 
$s(x,y) = x \star y$ \ and \ $t(x,y) = y \star x$, \ 
we rapidly run into trouble.  When $x = y$, both
terms reduce to \ $x \star x$, \ so it is impossible
to separate them in any groupoid.  This trick of 
identifying variables can be applied whenever $s$ and
$t$ have the same {\em shape}, which we can define
rigorously as follows.  Let $\chi$ be a distinguished
variable symbol, that we agree to use nowhere else.
Then we simply define the {\em shape} of a term \ 
$s(x_1,x_2, \ldots x_k)$ \ to be the term \ 
$s(\chi,\chi, \ldots \chi)$. \ 

As an aside, note that we can easily make 
the term functions \ $x \star y$ \ and \ $y \star x$ 
not equal whenever $x \neq y$,
for instance by letting $\star$ be $-$ over
$Z_3$.  This prompts the following question, which
we will not deal with further in this paper.

\begin{question}\label{same shape question}
\ Suppose that $s$ and $t$ are two terms of the 
same shape, and let $x_1$, $x_2$, \ldots $x_k$ be
all the variables appearing in either of them.
Given a set $S$ of equalities between variables in $\{ x_1,x_2, \ldots x_k \}$,
there is a function $\phi$ from $\{ 1,2,\dots k\}$ into $\{ 1,2,\dots k\}$
such that for each $i$, $\phi(i)$ is the least number such that the equality
$x_i = x_{\phi(i)}$ can be deduced from equalities in $S$.
Also let $s'$ be the term that results when for all $i$, $x_i$ is replaced by $x_{\phi(i)}$ throughout $s$, and let $t'$ be defined similarly.

Call a set of equalities $S$ between variables in $\{ x_1,x_2, \ldots x_k \}$
{\em identifying} iff the terms $s'$ and $t'$ are the same.  When is it
possible to have a finite algebra where the term functions
$s$ and $t$ are not equal whenever the values of their variables
do not satisfy any identifying sets of equations?
\end{question}

From now on, we will focus on separating two 
groupoid terms of different shapes.  
Since we are now dealing with arbitrary terms,
variables may occur more than once in a given 
term.  For clarity, we will usually use primes
to distinguish occurrences of a variable from the
variable itself, so that $x'$ might denote some 
particular occurrence of $x$.
We will say that terms $s$ and
$t$ are {\em finitely separated} whenever they are 
separated in some finite groupoid.

Observe that any groupoid term \ $s$ \ 
has a {\em natural order} to the occurrences of
its variables, the order produced
by an inorder transversal of the leaves of its full 
binary tree $T(s)$.  We will always write terms by 
listing occurences of variables
in this natural order.  In this
case, we call $x_1'$ the {\em leftmost} variable occurrence
in $s(x_1,...)$.  Each variable occurrence in $s$ 
corresponds to a leaf in $T(s)$, so occurrences of a given
variable may be distinguished by their paths in $T(s)$.
The leftmost variable occurrence in $s$ is then the only
one with a path in $\{l\}^*$.

By the {\em depth} of an occurrence of a
variable in the term $s$, we mean its height in $T(s)$.
We will denote the depth in $s$ of the variable occurrence $x'$
by $d_s(x')$.  Note that this is the same as the length of
the string $\path_s(x')$.

A naive intuition would be that terms $s$ and $t$ could
not be separated when there were a number of variables 
occurring in one term and not the other.  It is certainly
true that having more variables of this sort gives more
possibilities to assign values to them that would force
$s$ and $t$ to be equal.  For example, let
$s$ be \ $(x \star y) \star z$, \ and let $t$ be \ 
$(x \star x) \star (x \star x)$. \  Letting $x$ have
any fixed value, we assign \ $y := x$ \ and \ 
$z := x \star x$. \ Substituting these values in $s$,
it becomes \ $(x \star x) \star (x \star x)$, \ which
is $t$.  So $s$ and $t$ can not be separated in any
groupoid.  

However, there are terms with only a single variable
in common that can still be separated in a finite
groupoid.  For example, let $s$ be \ $x \star p$ \ and
let $t$ be \ $(x \star y) \star q$, where $p$ and $q$
can be arbitrary terms on any variables.  For the leftmost
occurrences of $x$, we have \ $\path_s(x) = l$ \ and \ 
$\path_t(x) = ll$.  So Theorem \ref{cover theorem} gives a finite
groupoid that separates $s$ and $t$.

To continue our investigation, we need the following 
extension of Theorem \ref{cover theorem}, which requires further 
definitions to state.  If $s$ and $t$ are groupoid terms
and $y$ and $z$ are variables, we say 
that $y$ {\em occurs above} $z$ if there are occurrences
$y'$ of $y$ and $z'$ of $z$ so that either 
$\path_s(y')$ is a initial substring of 
$\path_t(z')$ or $\path_t(y')$ is a initial 
substring of $\path_s(z')$.  In this situation, we also say that the
occurrence $y'$ is {\em above} the occurrence $z'$.
Similarly, $y$ {\em occurs strictly above} $z$ if there are occurrences
$y'$ of $y$ and $z'$ of $z$ so that either 
$\path_s(y')$ is a proper initial substring of 
$\path_t(z')$ or $\path_t(y')$ is a proper initial 
substring of $\path_s(z')$.  

We say that terms $s$ and
$t$ {\em have a cycle} if there is a sequence of variables
$y_0,y_1, \ldots y_{m-1}$ where $y_0$ occurs above $y_1$,
$y_1$ occurs above $y_2$, and so on, ending with 
$y_{m-1}$ occurring above $y_0$, where at least one of these
occurrences is strictly above the other.  
The hypothesis of Theorem
\ref{cover theorem} is that a single variable $x$ occurs
above itself, so that $s$ and $t$ have a cycle of length 1,
where the sequence $y_0,y_1, \ldots y_{m-1}$ is just $x$.
Our next theorem extends this result to cycles of arbitrary
length.  

\begin{figure}[ht]
	\centering
		\includegraphics{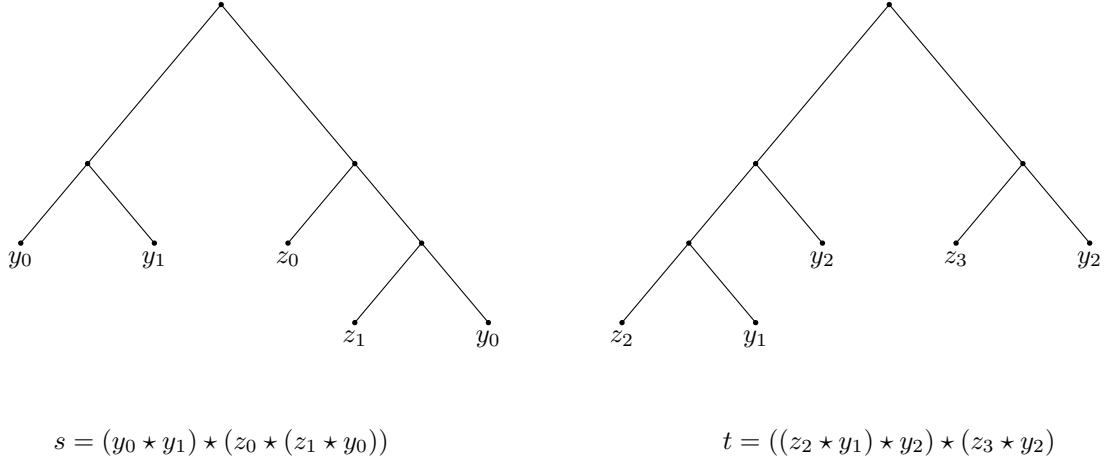}
	\caption{Two terms with a cycle}
	\label{fig:cycle}
\end{figure}

This proof will be easier to follow if we have an example for
reference.  It may be useful to refer back to this example while
reading the proof, as some of the notation it uses is defined
in the proof.  Figure \ref{fig:cycle} shows a cycle $y_0 y_1 y_2$
of length $3$, where $s = (y_0 \star y_1)\star(z_0 \star (z_1 \star y_0))$
and $t = ((z_2 \star y_1)\star y_2) \star (z_3 \star y_2))$.
Matching the notation of the coming theorem, we use superscripts
of $u$ and $d$ (for ``up''and ``down'') to label the distinct occurrences 
of variables in the cycle, as shown in Table \ref{table:upanddown}.

\begin{table}[ht]
	\centering
		\begin{tabular}{c|c|c|l|l|l}
		index&occurrence&term&path&$p_i$&$q_i$\\
		\hline
		$0$ & $y_0^u$ & $s$ & $ll$  & $ll$ &      \\
		    & $y_1^d$ & $t$ & $llr$ &      & $r$  \\
		\hline
		$1$ & $y_1^u$ & $s$ & $lr$ & $lr$ &           \\
		    & $y_2^d$ & $t$ & $lr$ &      & $\Lambda$ \\
		\hline
		$2$ & $y_2^u$ & $t$ & $rr$  & $rr$ &      \\
		    & $y_0^d$ & $s$ & $rrr$ &      & $r$  \\			
		\end{tabular}
	\caption{Occurrences in a cycle}
	\label{table:upanddown}
\end{table}

In the cycle, $y_0$ is strictly above $y_1$, since the occurrence 
$y_0^u$ in $s$ has path $ll$, which is an initial substring of $llr$,
the path in $t$ of the occurrence $y_1^d$.  And $y_1$ is above 
(but not strictly above) $y_2$, since the occurrence 
$y_1^u$ in $s$ has path $lr$, which is a (non-proper) initial 
substring of $lr$, the path in $t$ of the occurrence $y_2^d$.
Finally, $y_2$ is strictly above $y_0$, since the occurrence 
$y_2^u$ in $t$ has path $rr$, which is an initial substring of $rrr$,
the path in $s$ of the occurrence $y_0^d$.

The theorem also defines relations $\sim$ and $\approx$ on the index
set, which is $I = \{0,1,2\}$ in our example.  We have $1 \sim 2$, since 
$y_1^u$ is not strictly above $y_2^d$.  The relation $\approx$ is the
equivalence relation generated by $\sim$, so its classes are $\{0\}$
and $\{1,2\}$.  The function $f$ that takes each $i \in I$ to the least
element in its $\approx$ class has $f(0) = 0$ and $f(1) = f(2) = 1$.
Finally, the operation $\star'$ is 
$\left\| 3,ll,0 \right\| + \left\| 4,lr,1  \right\| + \left\| 4,rr,2 \right\| + 
\left\| 4,r,3 \right\|' + \left\| 3,r,4 \right\|$.  The reader can verify 
that this operation makes
$s[0] + s[1] + s[2] = y_0[3] + y_1[4] + (z_1 \star' y_0)[4] =
y_0[3] + y_1[4] + y_0[3] = y_1[4]$, where the last step follows since we are
adding values modulo $2$.  Similarly, 
$t[0] + t[1] + t[2] = (z_2 \star' y_1)[3] + y_2[4] + y_2[4] =
y_1[4] + 1 + y_2[4] + y_2[4] = y_1[4] + 1$, which always has a different value.

\begin{theorem}\label{extended theorem}
Let $s$ and $t$ be groupoid terms which have a 
cycle.  Then $s$ and $t$ are separated in a finite
groupoid.
\end{theorem}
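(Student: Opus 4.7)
My plan is to generalize Theorem \ref{cover theorem} (the length-one cycle case, a single variable with two nested paths) to cycles of arbitrary length $m$. The idea is to build a single duplicate- and collision-free operation $\star'$ on vectors over $\mathbf{Z}_2$ and then show that a carefully chosen sum of components disagrees between $s$ and $t$ by exactly $1$ mod $2$, forcing some component to separate them.

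First I would fix the combinatorial data. For each $i \in \{0,\ldots,m-1\}$ (indices mod $m$), pick witnesses $y_i^u$ (in some $\sigma_i \in \{s,t\}$) and $y_{i+1}^d$ (in the other term), write $p_i$ for the path of $y_i^u$, and let $q_i$ be the possibly empty suffix with $p_i q_i$ equal to the path of $y_{i+1}^d$. By hypothesis at least one $q_{i^*}$ is nonempty; fix such an $i^*$. To decide which internal indices should be shared, define $i \sim (i+1)$ when $q_i = \Lambda$, let $\approx$ be its reflexive transitive closure on $\{0,\ldots,m-1\}$, and take the representative function $f$. Then pick pairwise distinct fresh naturals: an output index $n_i$ for each $i$, and one internal index $\alpha_{f(i)}$ per $\approx$-class.

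Next I would construct
\[\star' \;=\; \sum_{i=0}^{m-1} \left\|\alpha_{f(i)},\,p_i,\,n_i\right\| \;+\; \sum_{\substack{i\,:\,q_i\ne\Lambda \\ i\ne i^*}} \left\|\alpha_{f(i+1)},\,q_i,\,\alpha_{f(i)}\right\| \;+\; \left\|\alpha_{f(i^*+1)},\,q_{i^*},\,\alpha_{f(i^*)}\right\|'\]
with auxiliary indices chosen to keep $\star'$ duplicate and collision free. Applying Lemma \ref{path lemma}, and Lemma \ref{tweaked lemma} for the one tweaked summand, I would evaluate each $s[n_i]$ and $t[n_i]$ as follows: in the term $\sigma_i$ the subterm at $p_i$ is just the variable $y_i$, so the up-summand yields $y_i[\alpha_{f(i)}]$ directly; in the other term the subterm at $p_i$ is compound, and the matching down-summand continues through $q_i$ to yield $y_{i+1}[\alpha_{f(i+1)}]$, plus $1$ mod $2$ precisely when $i = i^*$. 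When $q_i = \Lambda$ no down-summand is invoked, which is consistent because then $\alpha_{f(i)} = \alpha_{f(i+1)}$ by construction.

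Finally I would sum over $i$. Each value $y_j[\alpha_{f(j)}]$ appears once from an up-role at edge $j$ and once from a down-role at edge $j{-}1$, so every variable contribution to $\sum_i s[n_i] + \sum_i t[n_i]$ occurs an even number of times and cancels mod $2$, leaving only the single tweak bit, which equals $1$. Hence $s[n_i] \ne t[n_i]$ for some $i$, and restricting to the finite subspace spanned by the finitely many indices appearing in $\star'$ produces a finite groupoid separating $s$ and $t$. I expect the main obstacle to be the bookkeeping: handling both possibilities of $\sigma_i \in \{s,t\}$ so the cycle can weave freely between the two terms, placing the tweak on exactly one strict edge so its $+1$ survives the telescoping, and verifying that the shared indices $\alpha_{f(i)}$ do not collide with the auxiliary slots used inside individual basic operations. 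Once those details are in place, Lemmas \ref{path lemma} and \ref{tweaked lemma} reduce the rest to a routine calculation.
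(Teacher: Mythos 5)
Your proposal is correct and follows essentially the same route as the paper: the same up/down occurrences $y_i^u$, $y_{i+1}^d$ with paths $p_i$ and suffixes $q_i$, the same equivalence relation collapsing the indices where $q_i=\Lambda$, the same sum of basic operations with exactly one tweaked summand on a strict edge, and the same telescoping parity argument via Lemmas \ref{path lemma} and \ref{tweaked lemma}. The one substantive item you file under ``bookkeeping'' that the paper treats as a real step: it first replaces the given cycle by one of \emph{minimal} length (so the $y_i$ are distinct) and then proves that no $p_i$ is an initial substring of any other, which is what actually guarantees the combined operation is duplicate and collision free.
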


\begin{proof}
Let $s$ and $t$ be terms with a cycle as above.  
So we have a sequence of variables
$y_0,y_1, \ldots y_{m-1}$ where $y_0$ occurs above $y_1$,
$y_1$ occurs above $y_2$, and so on, ending with 
$y_{m-1}$ occurring above $y_0$.  We may assume that
this cycle has minimal length $k$ for all cycles of $s$ 
and $t$, and that $k \geq 2$ since cycles of length $1$ 
are covered by Theorem \ref{cover theorem}.  This
implies that all of the variables $y_i$ are
distinct.  We also adopt the convention that our 
subscripts are calculated modulo $k$, so that $y_k$ is 
the same as $y_0$.

Each of the $y_i$ has two occurrences in the cycle.
For each $i$, let $y_i^u$ be the occurrence of 
$y_i$ that is above an occurrence of $y_{i+1}$, and
let $y_i^d$ be the occurrence of $y_i$ that is below
an occurrence of $y_{i-1}$.
A given occurrence $y'$ of a variable may be either in the 
term $s$ or in the term $t$.  

We denote whichever of $s$ and $t$ an occurrence
$y'$ is in by $\term(y')$.  We will 
then write $\path(y')$ to denote the path of $y'$ in 
$\term(y')$.  Note that $\term(y_i^u) \neq \term(y_{i+1}^d)$
for all $i$, since $\path(y_i^u)$ is an initial substring of
$\path(y_{i+1}^d)$ and $y_i \neq y_{i+1}$.

We will denote $\path(y_i^u)$ by $p_i$.  And since
$\path(y^d_{i+1})$ has $p_i$ as an initial
substring, we will write it as the concatenation
$p_iq_i$, where $q_i$ is possibly $\Lambda$.

We claim that none of the $p_i$ is an inital
substring of any of the others.  For suppose
$i \neq j$ and $p_i$ is an initial 
substring of $p_j$.  Since $y_j^u$
corresponds to a leaf of $T(\term(y_j^u))$, 
we must have $\term(y_i^u) \neq \term(y_j^u)$.
Now consider $y_{j+1}^d$.  We have that 
$\term(y^d_{j+1}) \neq \term(y_j^u)$, so
$\term(y^d_{j+1}) = \term(y_i^u)$.  We also
have that $p_i$ is an initial substring 
of $p_j$, which is an initial 
substring of $p_jq_j = \path(y_{j+1}^d)$.
In $T(\term(y_i^u))$, this would place the leaf 
corresponding to the occurrence
$y_{j+1}^d$ below the leaf corresponding to
$y_i^u$. The only way this could happen is if
$y_i^u = y_{j+1}^d$.  So $i = j+1$, and 
$y_i^u = y_i^d$.  But then 
$\term(y_{i-1}^u) \neq \term(y_i^d) = 
\term(y_i^u) \neq \term(y_{i+1}^d)$, so
$y_{i-1}^u$ and $y_{i+1}^d$ are occurrences
in the same term.  Now $\path(y_{i-1}^u)$ in an
initial substring of $\path(y_i^d) = \path(y_i^u)$,
which is an initial substring of $\path(y_{i+1}^d)$,
implying that both $\path(y^u_{i-1})$ and $\path(y^d_{i+1}$
label the same leaf of the
tree they are in.  So $i-1 = i+1 = j$, and our cycle
consists of just $y_i$ and $y_j$, with 
$y_i^u = y_i^d$ and $y_j^u = y_j^d$.  This is a 
contradiction, since at least one variable occurrence
in a cycle must be strictly above the next occurrence.
The claimi s established.

Without loss of generality, assume that the occurrence
$y_0^u$ is strictly above $y_1^d$, so $\path(y_1^d)$
is $p_0q_0$ where $q_0 \neq \Lambda$. 
Let $I = \{ 0,1,2,\dots k-1 \}$ be our set of indices for 
the $y_i$, and let $N$ be $\{ i \in I : q_i \neq \Lambda \}$
So $0 \in N$.

Define the relation $\sim$
 on $I$ by
$i \sim j$ iff $j = (i+1) \mod k$ and $q_i = \Lambda$, and 
let $\approx$ be the equivalence relation generated by $\sim$.
Intuitively, the classes of $\approx$ are runs of consecutive
indices, with each class ending at an element of $N$.  

Finally, define $f : I \rightarrow I$ by letting
$f(i)$ the least element of the $\approx$ equivalence class
of $i$.  This gives us that $f(i) = f(i+1)$ when 
$q_i = \Lambda$.  (We usually have $f(i) \neq f(i+1)$
when $q_i \neq \Lambda$.  The one exception is when
only one of the $q_j$ is not $\Lambda$, so 
$i$ and $i+1$ are related by $\approx$ the long way around
the cycle.)

Now we define the groupoid operation $\star$ to be the sum \ 
$[\left\|k+f(0),p_0,0\right\| + \left\|k+f(1),p_1,1\right\| + \cdots
\left\|k + f(k-1),p_{k-1},k-1\right\|] + 
\sum_{i \in N} \left\|k+f(i+1),q_i,k+f(i)\right\|$
\ The groupoid operation
 $\star '$ will be 
 $\star + 
\left\|k+f(1),q_0,k+f(0)\right\|' - \left\|k+f(1),q_0,k+f(0)\right\|$, \ 
a slight variation of $\star$
 where the operation $\left\|k+f(1),q_0,k+f(0)\right\|$ is replaced with
 the tweaked operation $\left\|k+f(1),q_0,k+f(0)\right\|'$, while
 all of the other operations remain unchanged.  
 
 We will 
 show that in the groupoid with operation $\star$, that
 the sum modulo 2 of $s[0]+s[1]+ \dots s[k-1]$
 will always equal the sum modulo 2 of 
 $t[0]+t[1]+\dots t[k-1]$.  Then we will
 confirm that in the groupoid with operation $\star '$,
 the two corresponding sums of components will differ.
 This difference
 will be caused by the tweaked operation $\left\|k+f(1),q_0,k+f(0)\right\|'$,
 which will only produce an effect in the final output in
 $\term(y_1^d)$, the term where the occurrence $y_1^d$
 lies.  For the moment, we will be working with the 
 operation $\star$.
  
 First, we establish that for any $y_i$, the value of
 the $i$-th component of $\term(y_i^u)$ will be $y_i[k+f(i)]$.
 Without loss of generality, let $\term(y_i^u)$ be $s$.
 The only summand of 
  $\star$ that assigns a value to
 $s[i]$ is $\left\|k+f(i),p_i,i\right\|$, so $s[i]$ will have the value
 it assigns.  We apply Lemma \ref{path lemma}, and get 
 that $s[i]$ is equal to $r[k+f(i)]$, where $r$ is the 
 subterm of $s$ with path $p_i$.  In this case, 
 $r = y_i$, so $s[i] = r[k+f(i)] = y_i[k+f(i)]$, as 
 desired.
 
 Given any $i$, we let $j = i+1 \mod k$.   
 We now show that for any $y_j$, the value 
 of the $i$-th component of $\term(y_j^d)$ is also 
 $y_j[k+f(j)]$.  Without loss of generality, let 
 $\term(y_j^d)$ be $t$.  As before, $t[i]$ will be 
 equal to $r[k+f(i)]$, where $r$ is the subterm of 
 $t$ with path $p_i$.  We now have two cases.  If $i \notin N$,
 then $q_i = \Lambda$ and $\path(y_j^d) = p_i$, making $r = y_j$
 and $t[i] = y_j[k+f(i)] = y_j[k+f(j)]$ since $i \sim j$.
 So assume $i \in N$.  Then $r$ is a nontrivial subterm of $t$,
 where $\path_r(y_j^d)$ is $q_i$.  The only term in $\star$ that
 assigns a value to $r[k+f(i)]$ is $\left\|k+f(i+1),q_i,k+f(i)\right\|$, so 
 $r[k+f(i)]$ is $y_j[k+f(i+1)] = y_j[k+f(j)]$, as desired.
  
 For each $i$, we do not know which of $s$ and $t$ the 
 occurrences $y_i^u$ and $y_{i+1}^d$ are in.  This turns out
 not to be an obstacle, since we do know that $y_i^u$ and 
 $y_{i+1}^d$ occur in different terms.  Working modulo $2$, we have that 
 $s[0] + s[1] + \cdots s[k-1] + t[0] + t[1] + \cdots t[k-1]
 = [y_0[k+f(0)] + y_1[k+f(1)] + \cdots y_{k-1}[k+f(k-1)]] + 
 [y_1[k+f(1)]+ y_2[k+f(2)]+ \cdots y_k[k+f(k)]]$,
 where the second group on the right hand side comes from the
 $y_j^d$.  But the latter expression is equal to
 $2 [y_0[k+f(0)] + \cdots y_{k-1}[k+f(k-1)]] = 0$ modulo $2$.  
 Since $s[0] + \cdots s[k-1]$ and $t[0] + \cdots t[k-1]$ sum
 to $0$, they have the same parity.
 
 Now we turn to the groupoid with operation $\star '$, and
 consider the effect of the tweaked operation $\left\|k+f(1),q_0,k+f(0)\right\|'$.
 The reader may verify that everything works as before,
 except in the calculation of the $0$-th component of 
 $\term(y_1^d)$.  As before, we may assume that $s$ is
 $\term(y_1^d)$.  We then get $s[0] = r[k+f(0)]$, where
 $r[k+f(0)]$ is found using $\left\|k+f(1),q_0,k+f(0)\right\|'$.  This makes
 $r[k] = y_1[k+f(1)]+1 \mod 2$, giving $s[0] = y_1[k+f(1)]+1 \mod 2$.
 This in turn changes the parity of $s[0] + s[1] + \cdots s[k-1]$
 in whichever term we are calling $s$, as desired.
 
As in Theorem \ref{cover theorem}, this yields a finite groupoid
that separates $s$ and $t$.  
\end{proof}
 
We would like to have a nice characterization of which pairs
of groupoid terms can be separated in a finite groupoid.  So 
we will also investigate when it is impossible to separate a
pair of terms in any groupoid.  

We need a bit of preliminary material on free algebras.  A more
detailed exposition may be found in \cite{BurrisSankappanavar}.
We use $\mathbf G$ for the class of all groupoids, and let
$F_{\mathbf G}(y_0, y_1, \ldots y_{n-1})$ denote the free groupoid
with generators $y_0, y_1, \ldots y_{n-1}$.  The key feature of
$F_{\mathbf G}(y_0, y_1, \ldots y_{n-1})$ is that it has the 
Universal Mapping Property for the class of groupoids.  If $G$
is any groupoid with elements 
$g_0, g_1, \ldots g_{n-1}$, then there is a unique homomorphism 
$\phi$ from $F_{\mathbf G}(y_0, y_1, \ldots y_{n-1})$ into
$G$ where $\phi(y_i) = g_i$ for all $i$.

\begin{theorem}\label{TFAE theorem}
Let $s$ and $t$ be groupoid terms, each on a set of variables
that is a subset of $\{ y_0, y_1, \ldots y_{n-1} \}$.  
Then the following are equivalent.
\begin{enumerate}
	\item  $s$ and $t$ are separated in some groupoid.
	\item  $s$ and $t$ are separated in 
	$F_{\mathbf G}(y_0, y_1, \ldots y_{n-1})$.
	\item  $s$ and $t$ are separated in $F_{\mathbf G}(x)$, the free
	groupoid on one variable.
\end{enumerate}
\end{theorem}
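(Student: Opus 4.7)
The plan is to prove the cyclic chain of implications, with the two easy directions (2)$\Rightarrow$(1) and (3)$\Rightarrow$(1) coming for free since both free groupoids are themselves groupoids, and the substantive content residing in (1)$\Rightarrow$(2) and (1)$\Rightarrow$(3). Both nontrivial implications are handled in essentially the same way, by invoking the Universal Mapping Property of the free groupoid to transport a would-be failure of separation in the free groupoid into a failure of separation in the hypothesized groupoid.

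For (1)$\Rightarrow$(2), I would argue by contrapositive. Suppose $s$ and $t$ are \emph{not} separated in $F_{\mathbf G}(y_0,\dots,y_{n-1})$. Then there exist elements $u_0,\dots,u_{n-1}$ (themselves terms in the generators) of the free groupoid with $s(u_0,\dots,u_{n-1})=t(u_0,\dots,u_{n-1})$. Given any groupoid $H$ and any choice of elements $h_0,\dots,h_{n-1}\in H$, the UMP supplies a unique homomorphism $\phi\colon F_{\mathbf G}(y_0,\dots,y_{n-1})\to H$ with $\phi(y_i)=h_i$. Since homomorphisms commute with term operations, applying $\phi$ to both sides yields $s(\phi(u_0),\dots,\phi(u_{n-1}))=t(\phi(u_0),\dots,\phi(u_{n-1}))$ in $H$. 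The elements $\phi(u_i)$ are specific elements of $H$, so this witnesses a failure of separation in $H$. Since $H$ was arbitrary, $s$ and $t$ cannot be separated in any groupoid, contradicting (1).

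The argument for (1)$\Rightarrow$(3) is nearly identical, just with a one-generator free groupoid. If $s$ and $t$ fail to be separated in $F_{\mathbf G}(x)$, then $s(v_0,\dots,v_{n-1})=t(v_0,\dots,v_{n-1})$ for some $v_0,\dots,v_{n-1}\in F_{\mathbf G}(x)$. For any groupoid $H$ and any element $h\in H$, the UMP again furnishes a homomorphism $\phi\colon F_{\mathbf G}(x)\to H$ sending $x\mapsto h$, and pushing the equation through $\phi$ produces a failure of separation in $H$ using the elements $\phi(v_0),\dots,\phi(v_{n-1})$. This contradicts (1).

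The easy directions (2)$\Rightarrow$(1) and (3)$\Rightarrow$(1) just observe that $F_{\mathbf G}(y_0,\dots,y_{n-1})$ and $F_{\mathbf G}(x)$ are groupoids, so separation in them is a special case of separation in some groupoid. There is no real obstacle here; the only technical care needed is to remember that "$s$ and $t$ separated in an algebra $A$" quantifies over \emph{all} elements of $A$ as inputs to the variables, not merely over the generators, so the UMP-based argument must be applied to arbitrary substituted elements $u_i$ or $v_i$ rather than just to generators. Once this is kept in mind, the proof is a routine application of the universal property.
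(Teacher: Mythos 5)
Your proof is correct and uses essentially the same method as the paper: arguing the nontrivial directions by contrapositive and transporting a failure of separation through the homomorphism supplied by the Universal Mapping Property. The only (cosmetic) difference is the implication structure — you prove $(1)\Leftrightarrow(2)$ and $(1)\Leftrightarrow(3)$ directly, while the paper closes the cycle $(1)\Rightarrow(3)\Rightarrow(2)\Rightarrow(1)$, using for $(3)\Rightarrow(2)$ the homomorphism from $F_{\mathbf G}(y_0,\ldots,y_{n-1})$ to $F_{\mathbf G}(x)$ collapsing all generators to $x$; both routes are equally valid.
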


\begin{proof}
Let $s$ and $t$ be groupoid terms with all their variables
in $\{ y_0, y_1, \ldots y_{n-1} \}$.  It is clear that 
(2) implies (1).  To see that (3) implies (2), suppose that
(2) fails.  Then there are terms $h_0, h_1, \ldots h_{n-1}$ in
$F_{\mathbf G}(y_0, y_1, \ldots y_{n-1})$ with 
$s(h_0, h_1, \ldots h_{n-1}) = t(h_0, h_1, \ldots h_{n-1})$.
The $h_i$ are all generated from $\{ y_0, \ldots y_{n-1} \}$
by repeatedly using the groupoid operation.  Now consider
the homomorphism $\phi$ from 
$F_{\mathbf G}(y_0, y_1, \ldots y_{n-1})$ into 
$F_{\mathbf G}(x)$ that takes all of the $y_i$ to $x$.
Denoting the image of each $h_i$ by $h'_i$, we have that 
$s(h'_0, h'_1, \ldots h'_{n-1}) = t(h'_0, h'_1, \ldots h'_{n-1})$
in $F_{\mathbf G}(x)$, so (3) fails.

To see (1) implies (3), assume that (3) fails.  So we have
$f_0, f_1 , \ldots f_{n-1} \in F_{\mathbf G}(x)$ with 
$s(f_0, f_1, \ldots f_{n-1}) = t(f_0, f_1, \ldots f_{n-1})$.
Letting $G$ be any groupoid, we pick any $c \in G$, and 
consider the homomorphism $\phi$ from $F_{\mathbf G}(x)$ to 
$G$ that takes $x$ to $c$.  Letting the image of each 
$f_i$ be $f'_i$, we have that 
$s(f'_0, f'_1, \ldots f'_{n-1}) = t(f'_0, f'_1, \ldots f'_{n-1})$
in $G$, so (1) fails.
\end{proof}

The free groupoid $F_{\mathbf G}(x)$ is easy to work with,
since all of its elements may be viewed as groupoid terms
in the single variable $x$.  Terms $s$ and $t$ are separated
in $F_{\mathbf G}(x)$ iff there are no terms 
$f_0(x), f_1(x), \ldots f_{n-1}(x) \in F_{\mathbf G}(x)$ 
that can be substituted for the variables
of $s$ and $t$ to yield 
$s(f_0(x),f_1(x), \ldots f_{n-1}(x)) = 
t(f_0(x), f_1(x), \ldots f_{n-1}(x))$.  

This relates to the notion of {\em unification} of terms, which 
has been extensively studied in computer science.  The introduction
of the topic was by Herbrand, in \cite{Herbrand}. 
Modern work was pioneered by Robinson, in \cite{Robinson}.
Good 
survey articles are by Knight (in \cite{Knight}) and Jouannaud
and Kirchner (in \cite{JK}).  Consider two terms $s(x_0, \dots x_{m-1})$ 
and $t(y_0, \dots y_{n-1})$.  
The terms are {\em unifiable} if there are terms 
$r_0,\dots r_{m-1}$ and $u_0, \dots u_{n-1}$ so that substituting
the $r_i$ for the $x_i$ in $s$ and the $u_j$ for the $y_j$ in $t$
makes the two resulting terms identical, and the corresponding
substitution is a {\em unification}.  In other words, the terms
$s$ and $t$ can be unified iff they can not be separated in a free
algebra.  In view of the previous theorem, two terms can not be unified
iff there is a groupoid where they are separated.

Algorithms to see whether or not two terms $s$ and $t$ can be 
unified are discussed in detail in \cite{Knight} and \cite{JK}.  
An inefficient but effective method for groupoid terms 
is to use the following rules for generating sets of statements, 
starting with the statement $s = t$.  In each rule, 
a,b,c and d are terms, while x and y are variables.
\begin{enumerate}
	\item {\em (Decompose)}  
	From $a \star b = c \star d$ deduce $a = c$ and $b = d$.
	\item {\em (Coalesce)}  
	If we have $x = y$, deduce the results of replacing every $x$ in our set of statements with a $y$.
	\item {\em (Check)}
	From $x = a$, deduce $\mathbf{False}$ if $x$ occurs in the term $a$.
	\item {\em (Eliminate)}
	From $x = a$, deduce the results of replacing every $x$ in our set of 
	statements with the term $a$, provided $x$ does not occur in $a$.
\end{enumerate}
One may simply apply all the rules repeatedly, until no more statements 
are deduced.  If $\mathbf{False}$ is ever deduced, the original terms 
$s$ and $t$ can not be unified.  Otherwise, a unifying set of substitutions
will be deduced. In practice, one may be more targeted in applying the rules
and reach $\mathbf{False}$ or unifying substitutions more rapidly.

For example, consider $s = (x \star y) \star (z \star y)$ and
$t = z \star ((x \star y) \star (x \star x))$.  We will use the 
algorithm to see if they can be unified.  We start with $s = t$.
Using {\em Decompose}, we obtain $z = x \star y$ and
$z \star y = (x \star y) \star (x \star x)$.  Applying {\em Decompose}
again to the last statement, we get $z = x \star y$ (a duplicate) and
$y = x \star x$.  Applying {\em Eliminate} using $y = x \star x$ to
$z = x \star y$, we get $z = x \star (x \star x)$.  We have found a
set of unifying substitutions.  Letting $y = x \star x$ and
$z = x \star (x \star x)$ in $s$ and $t$, both become
$(x \star (x \star x)) \star ((x \star (x \star x)) \star (x \star x))$.

Here is an example with a cycle.  In view of Theorem \ref{extended theorem},
it will be no surprise that this is an obstacle to unification.
Let $s = (x \star y) \star (z \star w)$ and let 
$t = ((w \star u) \star x) \star ((y \star v) \star z)$.
Repeatedly applying {\em Decompose}, we get $x = w \star u$,
$y = x$, $z = y \star v$ and $w = z$.  Applying {\em Coalesce},
we get $x = z \star u$ and $z = x \star v$.  Applying {\em Eliminate}
gives $x = (x \star v) \star u$, and applying {\em Check} gives 
$\mathbf{False}$.  Our applications of {\em Coalesce}
and {\em Eliminate} acted to reduce the length of the 
original cycle.  Using the
notation of Theorem \ref{extended theorem}, this cycle had
$y_0^u$ the $x$ in $s$, $y_0^d$ the $w$ in $t$, $y_1^u$ the $w$ in $s$,
$y_1^d$ the $z$ in $t$, $y_2^u$ the $z$ in $s$, $y_2^d$ the $y$ in $t$,
$y_3^u$ the $y$ in $s$, and $y_3^d$ the $x$ in $t$.

There are pairs of terms without a cycle which still can not be unified.
For example, let $s = (x \star y) \star (z \star y)$ and let 
$t = z \star ((y \star y) \star (x \star x))$.  Working left to right, we
see that $x$ and $y$ occur below $z$, $y$ occurs below $z$ and $x$ occurs
below $y$.  This is consistent with the ordering $x < y < z$.
Since there is a consistent ordering of the variables like this, 
there are no cycles.  However, $s$ and $t$ can not be unified.  Applying 
{\em Decompose} repeatedly gives $z = x \star y$, $z = y \star y$ and
$y = x \star x$.  Then applying {\em Eliminate} to the first two
gives $x \star y = y \star y$, after which {\em Decompose} gives 
$x = y$.  Finally, {\em Eliminate} gives $x = x \star x$, and 
{\em Check} gives $\mathbf{False}$.

Although Theorem \ref{extended theorem} does not apply to this last 
example, we had no problem separating the terms using a similar 
construction.  Letting $\star' = \left\|3,l,0\right\| + \left\|3,rl,1\right\| + 
\left\|4,rr,2\right\| + \left\|4,l,3\right\| + \left\|4,l,4\right\|'$, we calculate $s[0] + s[1] + s[2]
= (x \star y)[3] + z[3] + y[4] = x[4] + z[3] + y[4]$, while
$t[0]+t[1]+t[2] = z[3] + (y \star y)[3] + (x \star x)[4] =
z[3] + y[4] + x[4] + 1$, which has the opposite parity.

Based on many examples similar to the above, we make the following 
conjecture.

\begin{conjecture} 
Whenever two groupoid terms can be separated in an
infinite groupoid, they can also be separated in a finite groupoid.
\end{conjecture}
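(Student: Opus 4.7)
The plan is to use Theorem \ref{TFAE theorem} to reduce the conjecture to a statement about unification: if two groupoid terms $s$ and $t$ cannot be unified (equivalently, cannot be separated in $F_{\mathbf G}(x)$), then they can be separated in a finite groupoid. The strategy is to convert any failure of the unification algorithm described earlier in the section into an explicit finite construction in the style of Theorems \ref{cover theorem} and \ref{extended theorem}.

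First I would analyze how the algorithm fails. Starting from $s = t$ and applying \emph{Decompose}, \emph{Coalesce}, \emph{Check}, and \emph{Eliminate} until no new statements are deduced, the procedure terminates either with a most general unifier or with $\mathbf{False}$. The first outcome makes $s$ and $t$ unifiable, contradicting the hypothesis by Theorem \ref{TFAE theorem}. In the second outcome, $\mathbf{False}$ is produced at a \emph{Check} step from some equation $x = a$ with $x$ occurring strictly inside $a$. I would trace the history of that equation back through each \emph{Eliminate} and \emph{Coalesce} step, tagging at every stage which pair of positions in $T(s)$ or $T(t)$ gave rise to the current statement. The end result is a \emph{generalized cycle}: a finite cyclic list of variable occurrences $y_0^u, y_0^d, y_1^u, y_1^d, \ldots, y_{k-1}^u, y_{k-1}^d$, where each consecutive pair $(y_i^u, y_{i+1}^d)$ is either related by path inclusion (as in Theorem \ref{extended theorem}) or linked by a chain of forced identifications produced by \emph{Coalesce} and \emph{Eliminate}, with at least one step of the cycle being strict.

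Next I would extend the construction of Theorem \ref{extended theorem} to such a generalized cycle. I would introduce one vector component per position in the cycle together with extra components for the intermediate subterm positions through which the identifications flow. The wiring $\left\|k+f(i+1), q_i, k+f(i)\right\|$ from Theorem \ref{extended theorem} would be augmented by summands of the same shape that propagate a designated component across each identified pair of occurrences, making the sum modulo $2$ of the relevant components of $s$ and $t$ agree for all substitutions. The single tweaked summand would then break the parity, exactly as in the example $s = (x \star y) \star (z \star y)$, $t = z \star ((y \star y) \star (x \star x))$ worked out just before the conjecture, yielding the desired separation in a finite groupoid.

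The hard part will be the combinatorial step of turning a unification trace into a well-defined cycle with explicit paths. Because \emph{Eliminate} can insert arbitrarily deep subterms, the forced identifications may link leaves that are not directly related by an initial-substring condition in the original trees; the bookkeeping required to record exactly which components are routed to which positions, while maintaining the duplicate-free and collision-free conditions needed by Lemmas \ref{path lemma} and \ref{tweaked lemma}, is where the argument is delicate. A secondary issue is confirming the mod-$2$ cancellation: every forced identification must contribute an even count of summands to the global sum so that only the tweak's fixed defect of $1$ survives. I expect both of these to yield to a careful induction on the length of the unification trace, but it is this bookkeeping, rather than any single new algebraic idea, that I anticipate as the core of the proof.
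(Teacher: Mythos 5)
There is a genuine gap, and it is worth noting first that the statement you are proving is labeled a \emph{Conjecture} in the paper: the authors offer no proof, and your reduction via Theorem \ref{TFAE theorem} (separable in some groupoid iff not unifiable, so the task is to show every non-unifiable pair is finitely separable) is exactly the reformulation they had in mind when they posed it. What remains after that reduction is the entire content of the conjecture, and your proposal defers it. The notion of ``generalized cycle'' is never defined precisely, and the claim that a $\mathbf{False}$ produced by \emph{Check} can always be traced back to a cyclic list of occurrences with at least one strict step, linked by path inclusion or by ``chains of forced identifications,'' is not established. The paper's own final example before the conjecture, $s = (x \star y) \star (z \star y)$ and $t = z \star ((y \star y) \star (x \star x))$, is given specifically to show that the occurs-check failure need not come from a cycle in the sense of Theorem \ref{extended theorem}: the obstruction only appears after \emph{Coalesce} has identified $x$ with $y$, and there is no initial-substring relation among the original leaf paths that the machinery of $\left\|m,p,n\right\|$ can latch onto directly.

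The deeper obstacle, which your proposal names but does not resolve, is that the affine construction only transports single ${\mathbf Z}_2$ components along fixed paths in $T(s)$ and $T(t)$, whereas the constraints produced by \emph{Eliminate} assert equality of whole subterms sitting at positions of different depths and shapes. The parity cancellation in Theorem \ref{extended theorem} works because each $y_i$ contributes exactly two summands, one from $y_i^u$ and one from $y_i^d$, lying in \emph{different} terms; once occurrences are linked through substituted terms rather than through direct path inclusion, there is no guarantee that the relevant occurrences pair off with the right multiplicities, or even that a single component-level witness pair exists. Saying that you ``expect both of these to yield to a careful induction on the length of the unification trace'' is a research program, not a proof; until that induction is carried out and the generalized wiring is shown to be duplicate- and collision-free with the required even cancellation, the conjecture remains open.
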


\end{section}



\centerline{\large\bf Addresses}\vspace{.5em}

$^1$Milton Braitt. Departamento de Matem\'{a}tica, 
Universidade Federal de Santa Catarina, Cidade Universit\'{a}ria,
Florian\'{o}polis, SC 88040-900, Brasil

\underline{Email}: \ MSBraitt@mtm.ufsc.br\ 

$^2$David Hobby and Donald Silberger. Department of Mathematics, State University
of New York, New Paltz NY 12561 -- U.S.A.

\underline{Emails}: \ hobbyd@newpaltz.edu \ or \ silbergd@newpaltz.edu\

\vspace{4em}

\noindent 2010 Mathematics Subject Classification: Primary: 20N02, 08A99  Secondary: 68Q99, 68T15

\noindent Keywords: groupoids, non-associative, unification

\end{document}